\newcommand{\degree}{\ensuremath{^\circ}}
\newtheorem{proof}[thm]{Proof: }
\begin{document}

\begin{frontmatter}

\title{Trajectory control of a bipedal walking robot with inertial disc\thanksref{footnoteinfo}} 

\thanks[footnoteinfo]{Carlos Novaes thanks CNPq for the financial support.}

\author[First]{Carlos Eduardo de Brito Novaes}
\author[Second]{Paulo Sergio Pereira da Silva}
\author[Third]{Pierre Rouchon}

\address[First]{Escola Polit\'{e}cnica da USP (e-mail: carlos.novaes@usp.br)}
\address[Second]{Escola Polit\'{e}cnica da USP (e-mail: paulo@lac.usp.br)}
\address[Third]{Mines ParisTech  (e-mail: pierre.rouchon@mines-paristech.fr)}

\begin{keyword}                           
Nonlinear control, Walking, Robots, Hybrid model, Autonomous mobile robots
\end{keyword}                             

\begin{abstract}
In this paper we exploit some interesting properties of a class of
bipedal robots which have an inertial disc. One of this properties
is the ability to control every position and speed except for the disc position.
The proposed control is designed in two hierarchic levels. The first will drive the robot geometry, while
the second will control the speed and also the angular momentum. The exponential stability of this approach
is proved around some neighborhood of the nominal trajectory defining the geometry of the step. This control
will not spend energy to adjust the disc position and neither to synchronize
the trajectory with the time. The proposed control only takes action
to correct the essential aspects of the walking gait.
Computational simulations are presented for different conditions, serving as a empirical
test for the neighborhood of attraction.\end{abstract}

\end{frontmatter}

\section{Introduction}

Dynamic robot locomotion is a particularly challenging study. One
of its main difficulties is due to the fact that the robot is sub-actuated,
that is, on a dynamic walking gait it is impossible to freely control each orientation of the robot's links. It is straightforward
to see this limitation as there is no actuator between the ground and the support polygon,
the gravity alone will impose an additional torque that will change the robot orientation. Aside from this, a dynamic walking
bipedal robot can achieve greater speeds than a static walker, so
there is a practical interest in this domain of research.

In \cite{GrizzleEtAl_FeedbackControlDynamicBipedal} the problem of
sub-actuated dynamic walking is treated in a systematic way that will
lead to controller design with assured stability and also, with a
step evolution clocked, not by the time, but by its own geometry.
This self clocked characteristic will be instrumental in this work.
We can see each step as a trajectory for the state vector of the dynamic
model. The classical controller design will ensure the state of the
model to track some reference tied to the time and some control effort
will take place even if the robot state is valid for the reference
trajectory but is not synchronized with the reference.

To successfully perform a step, the robot links must be driven in
some specific way, even for coordinates that cannot be directly controlled.
To accomplish this it may be necessary, for example, to swing the
robot's torso and this can be undesirable in some cases.

To have complete control over one coordinate, it is necessary to give
up the control of another one. So, if it is possible to ignore the position
or the orientation of a link, it will be possible to control every
other important coordinate. By using a inertial disc, its orientation
will be a cyclic variable, and thus, could be ignored. That is the
main idea introduced by the design presented in \cite{Kieffer1993_Biped_Walking}
and studied in many works as \cite{Rouchon_Ramirez_ControlOfTheWalkingToy},
\cite{AlmostLinearBiped_MarkWSpong} and \cite{Peres2008_Robo_Bipede}.
Theoretically it is possible to ignore the disc position and speed,
but in practice, there will be some limitation on the maximum speed
achieved by the disc, so that the actuators do not saturate. One solution
to this problem is proposed in \cite{Peres2008_Robo_Bipede} by means
of a supervisory control, that will perform a different trajectory
if the disc speed is beyond some limit value.

Section \ref{sec:Dynamic-Model} presents the hybrid dynamic model
as proposed in \cite{GrizzleEtAl_FeedbackControlDynamicBipedal}.
There is also a brief presentation of the hypotheses and terminology
therein and inherited by this paper.

Next, it will be introduced the trajectory planning in section \ref{sec:Design-of-theTrajectory}.
This is the main practical benefit of our approach, valid only for
this specific class of bipedal robots. In \cite{GrizzleEtAl_FeedbackControlDynamicBipedal},
the proposed strategy is to find the evolution of the robot geometry
as function of the absolute orientation and then check if it will
lead to a monotonic evolution of that absolute orientation. In our
approach, it is possible to fix the robot geometry as a desired function
of the absolute orientation. Then find a monotonic evolution of the
absolute orientation that lead to a repeatable evolution of the disc
speed. Another benefit is that the proposed control does not deal
at all with the disc position, but can continuously control its speed.
The control law and the convenient change of coordinates is also presented,
introducing the core contribution of this paper.

Section \ref{sec:ControlStability} present a demonstration of exponential
stability around some neighborhood of the planed trajectory.

Finally in section \ref{sec:Numerical-simulations} there are some
simulation results. \nocite{RobotDynamicsAndControl_SpongVidyasagar}

\section{Dynamic Model}\label{sec:Dynamic-Model}
The robot will be modeled by a chain of $N$ rigid links, each one
with known parameters such as mass, center of mass and inertia. The
links are connected by $N-1$ frictionless joints independently actuated.
There will be at least one and at most two point of contact with the
ground, called foots. The foots are punctual and unactuated.

For illustration purposes, a very simple bipedal robot is depicted
at figure \ref{fig:Chap2_ExampleBiped}. The coordinate $q_{N}$ is
the only absolute coordinate, $q_{d}$ is a relative coordinate to
represent the disc position and $\mathbf{q}_{r}$ is a relative coordinate
representing the angular displacement between the two legs.
\begin{figure}
\begin{centering}
\includegraphics[width=0.8\columnwidth]{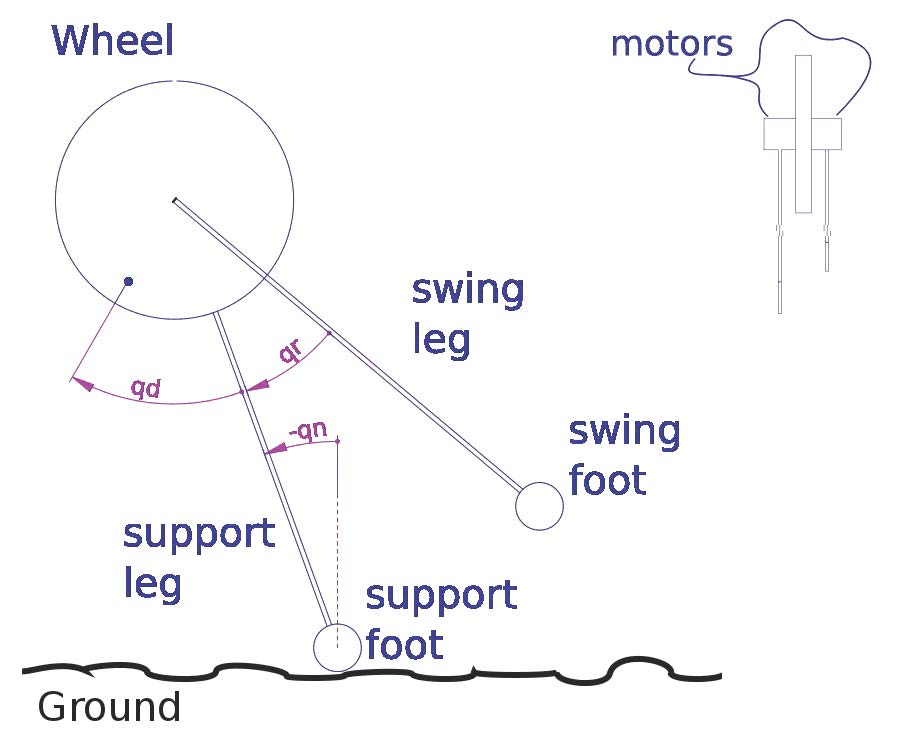}
\par\end{centering}

\caption{Example model. Side view with coordinates and detail of frontal view
in the upper right corner.\label{fig:Chap2_ExampleBiped}}
\end{figure}
 On a higher order robots $\mathbf{q}_{r}$ will be a vector representing
the angular displacements between each link. Indeed, the robot's shape
will be determined by $\mathbf{q}_{r}$.

The walking takes place on a surface, is restricted to the sagittal
plane, and is composed of alternating phases of single and double
support. During the single support phase, the stance foot acts as
an ideal pivot. The double support phase is instantaneous and associated
impacts are modeled as a rigid contact. At the impact, the swing leg
touches the ground with no slip nor rebound and, the former stance
leg releases without interaction.

The robot model is derived under the hypotheses briefly cited above
and detailed in \cite{GrizzleEtAl_FeedbackControlDynamicBipedal},
HR1\textasciitilde{}HR5, HGW1\textasciitilde{}HGW7 and HI1\textasciitilde{}HI7
(\cite[p48~p50]{GrizzleEtAl_FeedbackControlDynamicBipedal}). A small
change is introduced in HR6.
\begin{itemize}
\item [{$\textrm{HR\ensuremath{6^{*}}}$)}] the model is expressed in one
relative coordinate $q_{d}$ for the disc, N-2 relative coordinates
$\mathbf{q}_{r}$ for the rest of the body and only one absolute coordinate
$q_{N}$.
\end{itemize}
Using Lagrangian formulation, the dynamic
equations for the continuous phase, between two impacts, will be

{\small
\begin{eqnarray}
\mathbf{D}\left(\mathbf{q}_{r}\right)\mathbf{\ddot{q}}+\mathbf{C}\left(\mathbf{q}_{r},\mathbf{\dot{q}}_{r},\dot{q}_{N}\right)\dot{\mathbf{q}}+\mathbf{G}\left(\mathbf{q}_{r},q_{N}\right) & = & \mathbf{B}\left(\mathbf{q}_{r}\right)\mathbf{u}\label{eq:Chap2_DynamicModel}
\end{eqnarray}
}where{\small
\begin{eqnarray}
\mathbf{q} & \triangleq & \left(\begin{array}{ccc}
q_{d} & \mathbf{q}_{r} & q_{N}\end{array}\right)^{T}\label{eq:CoordinateVector_qs}
\end{eqnarray}
}{\small \par}

$q_{d}\in\mathbb{R}$, $\mathbf{q}_{r}\in\mathbb{R}^{N-2}$, $q_{N}\in\mathbb{R}$
and the vector of actuator torques being defined by $\mathbf{u}\in\mathbb{R}^{N-1}$.
$\mathbf{D}\left(\mathbf{q}_{r}\right)$ is called the Inertia Matrix,
$\mathbf{C}\left(\mathbf{q}_{r},\dot{\mathbf{q}}_{r}\right)$ is called
the Coriolis Matrix, $\mathbf{G}\left(\mathbf{q}_{r},q_{N}\right)$
is called the Gravity Vector and $\mathbf{B}\left(\mathbf{q}_{r}\right)$
will map the actuator torques as generalized forces.

It will be more convenient to define{\small
\begin{eqnarray}
\omega & \triangleq & \left(\begin{array}{ccc}
\sigma_{N} & \dot{\mathbf{q}}_{r} & \dot{q}_{N}\end{array}\right)^{T}\label{eq:coordinate_omega}
\end{eqnarray}
}where $\sigma_{N}$ is the angular moment of the system, calculated
as {\small
\begin{eqnarray}
\sigma_{N} & = & \mathbf{D}_{N}\left(\mathbf{q}_{r}\right)\dot{\mathbf{q}}\label{eq:Chap2_Momentum}
\end{eqnarray}
} $\mathbf{D}_{N}$ being the last line of the inertia matrix. Its
derivative will be

{\small
\begin{eqnarray}
\dot{\sigma}_{N} & = & -\mathbf{G}_{N}\left(\mathbf{q}_{r},q_{N}\right)\label{eq:Chap2_MomentumDerivative}
\end{eqnarray}
}$\mathbf{G}_{N}$ being the last line of the gravity vector.

There will be a diffeomorphism between $\dot{\mathbf{q}}$ and $\omega$
and so it is possible to define the state vector{\small
\begin{eqnarray}
\mathbf{x} & \triangleq & \left(\begin{array}{cc}
\mathbf{q} & \omega\end{array}\right)^{T}\label{eq:vector_state}
\end{eqnarray}
}{\small \par}

and denote by $\mathcal{X}\subset\mathbb{R}^{N}$ the set of valid
values for the vector state $\mathbf{x}$.

At some moment, the swing foot will touch the ground with state $\mathbf{x}^{-}$
and the system will be mapped to a new state $\mathbf{x}^{+}$. It
is assumed that the walking gait is transversal to the impact surface.
So, the impact forces will change the speeds on $\omega$. After the
impact the roles of the legs are reversed with a change in $\mathbf{q}$.
As result, the system state, $\mathbf{x}^{+}$, after the impact is

{\small
\begin{eqnarray}
\mathbf{x}^{+} & = & \Delta\left(\mathbf{x}^{-}\right)\label{eq:ImpactMap_omega}\\
\Delta & = & \left(\begin{array}{c}
\Delta_{\mathbf{q}}\mathbf{q}_{s}^{-}\\
\Delta_{\omega}\left(\mathbf{q}_{r}^{-},q_{N}^{-}\right)\omega^{-}
\end{array}\right)
\end{eqnarray}
}with $\Delta_{\mathbf{q}}$ being a constant involutive matrix and
$\Delta_{\omega}$ a function of the robot geometry $\mathbf{q}_{r}^{-}$
and absolute orientation $q_{N}^{-}$. Note that any value with a
superscript $\mathbf{q}^{-}$ means the value just before the impact
at the end of the step, and a superscript $\mathbf{q}^{+}$ means
the value immediately after the impact at the beginning of a new step.
This is the same notation utilized in \cite{GrizzleEtAl_FeedbackControlDynamicBipedal}.

The impact, modeled by the mapping $\Delta\left(\mathbf{x}^{-}\right)$,
will take place at a given configuration that can be represented by
a manifold $\mathcal{S}$, for example{\small
\begin{eqnarray}
\mathcal{S} & \triangleq & \left\{ \mathbf{x}\in\mathcal{X}|p_{2}\left(\mathbf{q}\right)=0,\dot{p}_{2}\left(\mathbf{x}\right)<0\right\} \label{eq:ImpactManifold}
\end{eqnarray}
}where $p_{2}\left(\mathbf{q}\right)\geq0$ is a convenient%
\footnote{The function $p_{2}\left(\mathbf{x}\right)$ can express for example,
the vertical position of the swing foot and will reach zero only when
the swing foot touch the ground. 
} function for which $\dot{p}_{2}\left(\mathbf{x}^{+}\right)>0$ and
$\dot{p}_{2}\left(\mathbf{x}^{-}\right)<0$.
The complete hybrid model will be

{\small
\begin{equation}
\begin{cases}
\dot{\mathbf{x}}=\mathbf{f}\left(\mathbf{x}\right)+\mathbf{g}\left(\mathbf{x}\right)\mathbf{u} & \mathbf{x}\notin\mathcal{S}\\
\mathbf{x}^{+}=\Delta\left(\mathbf{x}^{-}\right) & \mathbf{x}\in\mathcal{S}
\end{cases}\label{eq:HybridSystem}
\end{equation}
}{\small \par}

For this particular class of robots, if the coordinate vector is choose
as (\ref{eq:vector_state}) then it is possible to demonstrate that
the first column of $\Delta_{\omega}$ is always $\left(\begin{array}{cccc}
1 & 0 & \cdots & 0\end{array}\right)^{T}$. This is due to $q_{d}$ being a cyclic coordinate.

\section{Design of the trajectory}\label{sec:Design-of-theTrajectory}

In this section we present some aspects of the trajectory generation.

\subsection{Ensuring a geometry tied to the absolute orientation}

The idea of a geometry tied to the absolute orientation is presented
in \cite{GrizzleEtAl_FeedbackControlDynamicBipedal}. In this kind
of design, all the actuators will be used to drive the robot geometry
as a function of the absolute orientation. This geometry must be chosen
in a way that, apart from not being directly controlled, the absolute
orientation will have a monotonic evolution. Also, the interaction
of the robot with the ground must produce a stable limit cycle.

In our robot, the disc position will be ignored. This allows us to
get a new input available. Every relative link orientation, except
for the disc coordinate $q_{d}$, should track a predefined function
of $q_{N}$. This way, at any time, the robot configuration will be
a function of its absolute orientation. This kind of control is equivalent
to drive $\mathbf{q}_{r}-\mathbf{h}_{r}^{ref}\left(q_{N}\right)$
to zero on the output defined as

{\small
\begin{eqnarray}
\mathbf{h} & = & \left(\begin{array}{cc}
\mathbf{h}_{r} & q_{N}\end{array}\right)^{T}\label{eq:Chap2_OurOutput}\\
\mathbf{h}_{r} & = & \mathbf{q}_{r}-\mathbf{h}_{r}^{ref}\left(q_{N}\right)
\end{eqnarray}
}with $\mathbf{h}_{r}^{ref}\left(q_{N}\right)$ being the desired
evolution of $\mathbf{q}_{r}$ as a function of $\mathbf{q}_{N}$.
The output described by (\ref{eq:Chap2_OurOutput}) has dimension
$N-1$ which is the same dimension of the input $\mathbf{u}$.
The dynamic system can be written in terms of the global diffeomorphism defined by{\small
\begin{eqnarray}
\mathbf{p} & = & \left(\mathbf{h},q_{d},\dot{\mathbf{h}},\sigma_{N}\right)\label{eq:Coordinate_p}
\end{eqnarray}}

If the decoupling matrix $\mathcal{L}_{g}\mathcal{L}_{f}\mathbf{h}$
is square%
\footnote{It will be square as $\mathbf{h}$ and $\mathbf{u}$ have both the
same dimension.%
} and invertible, there exists a input
{\small
\begin{eqnarray}
\mathbf{u}^{*} & = & \left(\mathcal{L}_{g}\mathcal{L}_{f}\mathbf{h}\right)^{-1}\left\{ \mathbf{v}-\mathcal{L}_{f}^{2}\mathbf{h}\right\} \label{eq:Chap2_LinearInput}
\end{eqnarray}
}so that by the use of $\mathbf{u}^{*}$ and the coordinate change (\ref{eq:Coordinate_p}),
the system can be viewed as $N-1$ independent series of 2 integrators driven
by the virtual input $\mathbf{v}=\left(\mathbf{v}_{r},v_{N}\right)^{T}$.

Thus, if it is possible to find $\mathbf{u}^{*}$, it will be possible
to control every entry of $\mathbf{h}$ by the input $\mathbf{v}$.
More information on the exact linearization control can be found in
\cite{Isidori:NonLinearControlSystems_3ed}.

Applying the change of coordinates defined by (\ref{eq:Coordinate_p})
it is possible to write {\small
\begin{eqnarray}
\dot{\sigma}_{N} & = & k_{2}\left(\mathbf{h}_{r},q_{N}\right)\label{eq:MomentumDerivative_Restricted}\\
k_{2}\left(\mathbf{h}_{r},q_{N}\right) & \triangleq & -\mathbf{G}_{N}\left(\mathbf{h}_{r}+\mathbf{h}_{r}^{ref}\left(q_{N}\right),q_{N}\right)
\end{eqnarray}
}{\small \par}

Using the input (\ref{eq:Chap2_LinearInput}), the change of coordinates
(\ref{eq:Coordinate_p}) and discarding $q_{d}$, it is possible to
rewrite the continuous phase as{\small
\begin{eqnarray}
\cfrac{d}{dt}\left(\begin{array}{c}
h_{r}\\
\dot{h}_{r}\\
\sigma_{N}\\
q_{N}\\
\dot{q}_{N}
\end{array}\right) & = & \left(\begin{array}{c}
\dot{h}_{r}\\
0\\
k_{2}\left(\mathbf{h}_{r},q_{N}\right)\\
\dot{q}_{N}\\
0
\end{array}\right)+\left(\begin{array}{c}
0\\
\mathbf{v}_{r}\\
0\\
0\\
v_{N}
\end{array}\right)\label{eq:LinearizedSystem}
\end{eqnarray}
}{\small \par}

Now define an embedded manifold

{\small
\begin{eqnarray}
\mathcal{W} & \triangleq & \left\{ \mathbf{x}\in\mathcal{X}|h_{r}=0,\,\dot{h}_{r}=0\right\}
\end{eqnarray}
}{\small \par}

From equation (\ref{eq:LinearizedSystem}) it is possible to see that
the input $\mathbf{v}_{r}$ can be used to drive $\mathbf{h}_{r}=0$.
The input $v_{N}$ remains free to control the absolute speed
$\dot{q}_{N}$ and also, when restricted to $\mathcal{W}$, the angular momentum $\sigma_{N}$.

Assume that it is possible to find functions $V\left(q_{N}\right)>0$ and $S\left(q_{N}\right)$, such that if $\dot{q}_{N}=V\left(q_{N}\right)$, then $\sigma_{N}=S\left(q_{N}\right)+\mathbb{C}_{\sigma}$, where
$\mathbb{C}_{\sigma}$ is some constant bias. It is possible to use
the input $v_{N}$ to exponentially attenuate this bias.

\subsection{Finding a periodic step}
The manifold $\mathcal{W}$ is said to be forward invariant if solutions
starting at $\mathcal{W}$ will remain in $\mathcal{W}$. It will
be said to be impact invariant if $\mathcal{W}\cap\mathcal{S}\neq\emptyset$
and $\Delta\left(\mathcal{W}\cap\mathcal{S}\right)\subset\mathcal{W}$.
Observe that $\Delta\left(\mathcal{W}\cap\mathcal{S}\right)\cap\mathcal{S}=\emptyset$.
If $\mathcal{W}$ is both forward and impact invariant, it is said
to be hybrid invariant \cite[p 96]{GrizzleEtAl_FeedbackControlDynamicBipedal}.
While the feedback control can lead to forward invariance, the impact
invariance is a design property.

For the nominal trajectory to be repeated, each variable evolution
must be hybrid invariant. So, not only $\mathcal{W}$, but also the
angular momentum reference $S\left(q_{N}\right)$, and the absolute
speed reference $V\left(q_{N}\right)$, has to be hybrid invariant
themselves.

One procedure to find such nominal trajectory would be:
\begin{enumerate}
\item Fix the absolute orientations at the beginning, $q_{N}=\bar{q}_{N}^{+}$,
and at the end, $q_{N}=\bar{q}_{N}^{-}$, of the step, and the nominal
configurations $\bar{\mathbf{q}}^{+}$ and $\bar{\mathbf{q}}^{-}$.
\item Determine a desirable evolution $\mathbf{h}_{r}^{ref}\left(q_{N}\right)$
for the robot configuration, such that

\begin{enumerate}
\item {\small $\bar{\mathbf{q}}^{+}=\Delta_{\mathbf{q}}\bar{\mathbf{q}}^{-}\forall\mathbf{q}\in\mathcal{W}$},
as $q_{d}$ is a cyclic coordinate, it can be ignored; this will ensure
impact invariance on the robot shape $\mathbf{h}_{r}^{ref}\left(q_{N}\right)$
\item depending on the value of $\Delta_{\omega}\left(\bar{\mathbf{q}}_{r}^{-},\bar{q}_{N}^{-}\right)$,
determine $\cfrac{\partial\mathbf{h}_{r}^{ref}\left(q_{N}^{+}\right)}{\partial q_{N}}$
and $\cfrac{\partial\mathbf{h}_{r}^{ref}\left(q_{N}^{-}\right)}{\partial q_{N}}$
for impact invariance of $\dot{\mathbf{h}}_{r}^{ref}\left(q_{N}\right)$.
There will be also a fixed increment or decrement of the angular momentum
$\Delta_{\sigma}=\sigma_{N}^{-}-\sigma_{N}^{+}$. This will lead to
impact invariance on $\mathcal{W}$.
\end{enumerate}
\item choose a impact invariant and convenient function candidate for $V\left(q_{N}\right)>0$
, then numerically calculate {\small
\begin{eqnarray}
S\left(q_{N}\right) & = & \intop_{q_{N}^{+}}^{q_{N}}\tfrac{k_{2}\left(0,\tau\right)}{V\left(\tau\right)}d\tau+\sigma_{N}^{+}
\end{eqnarray}
}such that $\Delta_{\sigma}=S\left(q_{N}^{-}\right)-S\left(q_{N}^{+}\right)$
will satisfy the impact invariance restriction for $S\left(q_{N}\right)$.
\end{enumerate}
The control must ensure forward invariance and also a stability around this planned
trajectory.

\subsection{A new change of coordinates}
By construction of $V\left(q_{N}\right)$ and $S\left(q_{N}\right)$,
when restricted to $\mathcal{W}$, where $\mathbf{h}_{r}=\mathbf{0}$,
the following relation holds{\small
\begin{eqnarray}
\left.k_{2}\left(q_{N},\mathbf{h}_{r}\right)\right|_{\mathcal{W}} & = & \cfrac{\partial S\left(q_{N}\right)}{\partial q_{N}}V\left(q_{N}\right)
\end{eqnarray}
}and so it is possible to write{\small
\begin{eqnarray}
k_{2}\left(\mathbf{h}_{r},q_{N}\right)-\cfrac{\partial S\left(q_{N}\right)}{\partial q_{N}}V\left(q_{N}\right) & = & \left\langle \mathbf{h}_{r},\mathbf{f}_{2}\left(\mathbf{h}_{r},q_{N}\right)\right\rangle \label{eq:MorseLemma}
\end{eqnarray}
}where $\mathbf{f}_{2}\left(\mathbf{h}_{r},q_{N}\right)$ is a unknown
but bounded function.

Suppose now that exist hybrid impact invariant functions $\mathbf{h}_{r}^{ref}\left(q_{N}\right)$,
$V\left(q_{N}\right)$ and $S\left(q_{N}\right)$ . Then if we choose

{\small
\begin{eqnarray}
\mathbf{v}_{r} & = & -K_{P}\mathbf{h}_{r}-K_{V}\mathcal{L}_{f}\mathbf{h}_{r}\label{eq:Chap2_ControlLaw_Vr}
\end{eqnarray}
}with $K_{P}$ and $K_{V}$ positive definite, this will ensure convergence
of $\mathbf{q}_{r}\rightarrow\mathbf{h}_{r}^{ref}\left(q_{N}\right)$.
The convergence $\mathbf{x}\rightarrow\mathcal{W}$ will be independent
of $q_{N}$, $\dot{q}_{N}$, $q_{d}$ or $\dot{q}_{d}$ and also $\mathcal{W}$
will be forward invariant.

The input $v_{N}$ can be freely used to drive the absolute orientation
of the robot. It is theoretically possible to track any reference
trajectory by the use of the output (\ref{eq:Chap2_OurOutput}) as
we found a linearizable part of dimension $2N-2$, the side effect
being that we do not have direct control over the speed accumulated
by the disc. But there is an interesting property of this robot, as
$q_{d}$ and $\dot{q}_{d}$ can be ignored for the dynamic model.
So the remaining coordinates will be $q_{r}$, $q_{N}$ , $\sigma_{N}$,
$\dot{q}_{r}$, $\dot{q}_{N}$. As $q_{N}$ is the only absolute coordinate,
$\mathbf{D}$ is also independent of $q_{N}$.

Now define\begin{subequations}\label{eq:NewCoordinates}{\small
\begin{eqnarray}
b & \triangleq & \sigma_{N}-S\left(q_{N}\right)\label{eq:NewCoordinates_b}\\
c & \triangleq & \dot{q}_{N}-V\left(q_{N}\right)-\cfrac{\gamma}{\beta_{0}}\cfrac{\partial S\left(q_{N}\right)}{\partial q_{N}}b\label{eq:NewCoordinates_c}
\end{eqnarray}
}\end{subequations}with $\gamma$ and $\beta_{0}$ positive constants.
It is possible to define the manifold {\small
\begin{eqnarray}
\mathcal{Z} & \triangleq & \left\{ \mathbf{x}\in\mathcal{W}|b=0,c=0\right\} \label{eq:manifold_Z}
\end{eqnarray}
}and it will be impact invariant. When the robot is performing the
nominal step, $\mathbf{x}\in\mathcal{Z}$ and $\mathcal{Z}\cap\mathcal{S}\in\mathbb{R}$.

As $q_{N}$ is monotonic at the continuous phase, it is possible to
integrate (\ref{eq:LinearizedSystem}) at $q_{N}$ and rewrite it
using the coordinates {\small
\begin{eqnarray}
\mathbf{y} & \triangleq & \left(\begin{array}{cccc}
\mathbf{h}_{r} & \dot{\mathbf{h}}_{r} & b & c\end{array}\right)^{T}\label{eq:NewVectorState}
\end{eqnarray}
}So, defining

{\small
\begin{eqnarray}
\xi\left(q_{N}\right) & \triangleq & \left(c\left(q_{N}\right)+V\left(q_{N}\right)+\nicefrac{\gamma}{\beta_{0}}\cfrac{\partial S}{\partial q_{N}}b\left(q_{N}\right)\right)^{-1}
\end{eqnarray}
}using the input $\mathbf{v}_{r}$ as defined in (\ref{eq:Chap2_ControlLaw_Vr})
and $v_{N}$ as

{\small
\begin{eqnarray}
v_{N} & = & \cfrac{\partial V}{\partial q_{N}}\dot{q}_{N}+\gamma\cfrac{\partial S}{\partial q_{N}}b-\beta_{0}\left(\dot{q}_{N}-V\right)+\cfrac{d}{dt}\left(\cfrac{\gamma}{\beta_{0}}\cfrac{\partial S}{\partial q_{N}}b\right)\label{eq:ControlLaw_vn}
\end{eqnarray}
} then the continuous part of the dynamic system, subject to the given
hypotheses, can be written as{\small
\begin{eqnarray}
\cfrac{d\mathbf{y}}{dq_{N}} & = & \bar{\mathbf{f}}\left(\mathbf{y},q_{N}\right)\label{eq:NewVectorFlowFunction}\\
\bar{\mathbf{f}}\left(\mathbf{y},q_{N}\right) & \triangleq & \xi\left(\begin{array}{c}
\dot{\mathbf{h}}_{r}\\
-K_{P}\mathbf{h}_{r}-K_{V}\dot{\mathbf{h}}_{r}\\
\left\langle \mathbf{h}_{r},\mathbf{f}_{2}\right\rangle -\tfrac{\partial S}{\partial q_{N}}c-\nicefrac{\gamma}{\beta_{0}}\left(\tfrac{\partial S}{\partial q_{N}}\right)^{2}b\\
-\beta_{0}c
\end{array}\right)
\end{eqnarray}
}Note also that by using the input (\ref{eq:ControlLaw_vn}), the
impact invariant manifold $\mathcal{Z}$ will be forward invariant
and thus, hybrid invariant.

In this new coordinates, the impact map will be\begin{subequations}\label{eq:NewSystemWithInput_ImpactMap}{\small
\begin{eqnarray}
\left(\begin{array}{c}
q_{d}^{+}\\
\mathbf{h}_{r}^{+}\\
q_{N}^{+}
\end{array}\right) & = & \Delta_{q}\left(\begin{array}{c}
q_{d}^{-}\\
\mathbf{h}_{r}^{-}+\mathbf{h}_{r}^{ref}\left(q_{N}^{-}\right)\\
q_{N}^{-}
\end{array}\right)-\left(\begin{array}{c}
0\\
\mathbf{h}_{r}^{ref}\left(q_{N}^{+}\right)\\
0
\end{array}\right)\label{eq:NewSystemWithInput_ImpactMap_A}\\
\left(\begin{array}{c}
b^{+}\\
\dot{\mathbf{h}}_{r}^{+}\\
c^{+}
\end{array}\right) & = & \Delta_{p}\left(h_{r}^{-},q_{N}^{-},q_{N}^{+}\right)\left(\begin{array}{c}
b^{-}\\
\dot{\mathbf{h}}_{r}^{-}\\
c^{-}
\end{array}\right)\label{eq:NewSystemWithInput_ImpactMap_B}
\end{eqnarray}
}\end{subequations}where

{\small
\begin{eqnarray}
\Delta_{p} & = & \Delta_{1}\left(q_{N}^{+}\right)\Delta_{\omega}\left(\mathbf{h}_{r}+\mathbf{h}_{r}^{ref}\left(q_{N}\right)\right)\Delta_{2}\left(q_{N}^{-}\right)\label{eq:NewImpactMapWithInput}\\
\Delta_{1} & = & \left(\begin{array}{ccc}
1 & 0 & 0\\
0 & 1 & -\cfrac{\partial\mathbf{h}_{r}^{ref}}{\partial q_{N}}\\
-\nicefrac{\gamma}{\beta_{0}}\cfrac{\partial S}{\partial q_{N}} & 0 & 1
\end{array}\right)\nonumber \\
\Delta_{2} & = & \left(\begin{array}{ccc}
1 & 0 & 0\\
\nicefrac{\gamma}{\beta_{0}}\cfrac{\partial\mathbf{h}_{r}^{ref}}{\partial q_{N}}\cfrac{\partial S}{\partial q_{N}} & 1 & \cfrac{\partial\mathbf{h}_{r}^{ref}}{\partial q_{N}}\\
\nicefrac{\gamma}{\beta_{0}}\cfrac{\partial S}{\partial q_{N}} & 0 & 1
\end{array}\right)\nonumber
\end{eqnarray}
}From (\ref{eq:NewSystemWithInput_ImpactMap_A}) and (\ref{eq:ImpactManifold})
it is possible to find the impact effect over $\mathbf{h}_{r}$ and
the actual integration limits $q_{N}^{+}$ and $q_{N}^{-}$, $q_{N}^{+}<q_{N}^{-}$.
From (\ref{eq:NewSystemWithInput_ImpactMap_B}) it is possible to
find how the remaining part of (\ref{eq:NewVectorState}) are affected
by the impact. The impact map with respect to the vector state (\ref{eq:NewVectorState})
will be{\small
\begin{eqnarray}
\mathbf{y}^{+} & = & \Delta_{\mathbf{y}}\left(\mathbf{y}^{-}\right)\label{eq:NewVectorImpactMap}
\end{eqnarray}
}and can be obtained by reshaping (\ref{eq:NewSystemWithInput_ImpactMap})
adequately. From (\ref{eq:NewVectorFlowFunction}) and (\ref{eq:NewVectorImpactMap}),
the complete hybrid system will be{\small
\begin{equation}
\begin{cases}
\cfrac{d\mathbf{y}}{dq_{N}}=\bar{\mathbf{f}}\left(\mathbf{y},q_{N}\right) & \mathbf{y}\notin\mathcal{S}\\
\mathbf{y}^{+}=\Delta_{y}\left(\mathbf{y}^{-}\right) & \mathbf{y}\in\mathcal{S}
\end{cases}\label{eq:NewHybridSystem}
\end{equation}
}

\subsection{Poincar\'{e} Map}

When walking, the dynamic system (\ref{eq:NewHybridSystem}) will
take a periodic evolution composed of a continuous phase, followed
by the impact map and another continuous phase. To evaluate the stability
of this periodic orbit, it will be important to define Poincar\'{e} return
map for the manifold $\mathcal{S}$ as{\small
\begin{eqnarray}
\mathcal{P} & : & \mathcal{S_{P}}\rightarrow\mathcal{S}
\end{eqnarray}
}where $\mathcal{S_{P}}\in\mathcal{S}$ is a neighborhood of $\mathbf{y}=\mathbf{0}$,
such that{\small
\begin{eqnarray}
\mathcal{P}\left(\mathbf{y}\right) & \triangleq & \varphi_{\mathbf{y}}\left(q_{N}^{+},q_{N}^{-},\Delta_{\mathbf{y}}\left(\mathbf{y}\right)\right)
\end{eqnarray}
}with $\varphi_{\mathbf{y}}\left(q_{N}^{+},q_{N}^{-},\mathbf{y}_{0}\right)$
being the integral curve of (\ref{eq:NewHybridSystem}) with initial
condition $\mathbf{y}_{0}$, starting at $q_{N}=q_{N}^{+}$ and ending
at $q_{N}=q_{N}^{-}$. So $\varphi_{\mathbf{y}}\left(q_{N}^{+},q_{N}^{+},\mathbf{y}_{0}\right)=\mathbf{y}_{0}$.

By the hybrid invariance of the manifold $\mathcal{Z}$, $\mathbf{y}_{0}=\mathbf{0}$
will be a fixed point of $\mathcal{P}${\small
\begin{eqnarray}
\mathcal{P}\left(\mathbf{0}\right) & = & \mathbf{0}
\end{eqnarray}
}As expected, there exist a periodic orbit of $\mathbf{y}\in\mathcal{Z}$
and, the interception of this periodic orbit with the impact surface
$\mathcal{S}$, that is $\mathcal{S}\cap\mathcal{Z}$, occours at
$\mathbf{y}=\mathbf{0}$.

\section{Stability around the nominal step}\label{sec:ControlStability}
\begin{thm}
Under the given robot hypotheses and the existence of $\mathbf{h}_{r}^{ref}\left(q_{N}\right)$,
$V\left(q_{N}\right)$ and $S\left(q_{N}\right)$ such that the manifolds
$\mathcal{W}$ and $\mathcal{Z}$ are hybrid invariants, then:

- The Poincar\'{e} return map $\mathcal{P}$ is regular in some
neighborhood $\mathcal{S_{P}}\subset\mathcal{S}$ around $\mathbf{y}=0$.

- The Poincar\'{e} return map $\mathcal{P}$ can be made locally exponentially contracting in
some neighborhood $\mathcal{S_{C}}\subset\mathcal{S_{P}}$ of $\mathbf{y}=0$,
by adjusting the parameters $K_{P}$, $K_{v}$, $\beta_{0}$ and $\gamma$.\end{thm}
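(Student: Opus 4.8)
The plan is to study the linearization of $\mathcal{P}$ at its fixed point $\mathbf{y}=0$: regularity will follow from smooth dependence of the flow on initial data, and contraction from placing the spectrum of $D\mathcal{P}(0)$ strictly inside the unit disc by a suitable choice of gains.

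\emph{Regularity.} First I would verify that the reduced field $\bar{\mathbf{f}}$ in (\ref{eq:NewVectorFlowFunction}) is $C^1$ on a neighborhood of the nominal orbit $\mathcal{Z}$. The only candidate singularity is the scalar $\xi$, whose inverse is $\dot q_N=c+V+\tfrac{\gamma}{\beta_0}\tfrac{\partial S}{\partial q_N}b$; on $\mathcal{Z}$ this equals $V(q_N)>0$, so $\xi$ is smooth and bounded nearby and $q_N$ remains monotone, which legitimizes the $q_N$-parametrization. By transversality of the orbit to $\mathcal{S}$ (the condition $\dot p_2<0$ in (\ref{eq:ImpactManifold})) and the implicit function theorem, the exit value $q_N^-(\mathbf{y})$ at which the flow meets $\mathcal{S}$ again depends smoothly on the initial data. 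Writing $\mathcal{P}=\varphi_{\mathbf{y}}\circ\Delta_{\mathbf{y}}$ as the smooth flow-to-section map composed with the smooth impact map then shows $\mathcal{P}$ is $C^1$ on some $\mathcal{S}_P$, which settles the first claim.

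\emph{Structure of the linearization.} The key observation is that on $\mathcal{Z}$ every entry of the bracket in (\ref{eq:NewVectorFlowFunction}) vanishes, so in differentiating $\bar{\mathbf{f}}=\xi\,\mathbf{F}$ the term $(\partial_{\mathbf{y}}\xi)\mathbf{F}$ drops out and the linearized continuous dynamics reduce to $\tfrac{d}{dq_N}\delta\mathbf{y}=A(q_N)\,\delta\mathbf{y}$ with $A=\tfrac{1}{V}\partial_{\mathbf{y}}\mathbf{F}$. In the ordering $(\mathbf{h}_r,\dot{\mathbf{h}}_r,b,c)$ this $A(q_N)$ is block lower triangular: the shape block for $(\mathbf{h}_r,\dot{\mathbf{h}}_r)$ is the Hurwitz companion matrix built from $K_P,K_V$ and does not see $(b,c)$, while the $(b,c)$ block is upper triangular with eigenvalues $-\tfrac{\gamma}{\beta_0}(\partial S/\partial q_N)^2/V$ and $-\beta_0/V$, the shape errors entering $b$ only as a forcing through $\mathbf{f}_2$. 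Hence the transition matrix $\Phi(q_N^-,q_N^+)$ inherits this triangular form, and $D\mathcal{P}(0)=\Phi(q_N^-,q_N^+)\,D\Delta_{\mathbf{y}}(0)$, where $D\Delta_{\mathbf{y}}(0)$ is read off from (\ref{eq:NewSystemWithInput_ImpactMap})--(\ref{eq:NewImpactMapWithInput}): it is bounded, maps $\mathbf{h}_r^-$ into $\mathbf{h}_r^+$, and mixes $(\dot{\mathbf{h}}_r^-,b^-,c^-)$ through $\Delta_p=\Delta_1\Delta_\omega\Delta_2$.

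\emph{Contraction by tuning, and the main obstacle.} The second claim is $\rho(D\mathcal{P}(0))<1$, which I would establish by time-scale separation. Increasing $K_P,K_V$ (scaled so both closed-loop shape modes are fast) makes the shape block of $\Phi$ contract by an arbitrarily large factor over the fixed $q_N$-length of the step, so both the direct shape contraction and the feedthrough of $\dot{\mathbf{h}}_r$ into $(b,c)$ at the next impact become negligible; the residual spectrum is then governed by the $(b,c)$ block, where $\beta_0$ large places the $c$-mode arbitrarily fast and $\gamma$ is chosen to contract $b$. The delicate point, which I expect to be the crux, is that the impact blocks $\Delta_1,\Delta_2$ in (\ref{eq:NewImpactMapWithInput}) contain the very ratio $\gamma/\beta_0$ (and the slopes $\partial\mathbf{h}_r^{ref}/\partial q_N$) used for continuous contraction, so driving $\gamma/\beta_0$ up to speed the $b$-mode simultaneously inflates $D\Delta_{\mathbf{y}}(0)$ and the cross-coupling it introduces between the shape and $(b,c)$ directions. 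The proof therefore hinges on the ordering of the limits: first make the shape contraction dominant via large $K_P,K_V$, and only then take $\beta_0$ large with $\gamma/\beta_0$ kept bounded, certifying $\rho(D\mathcal{P}(0))<1$ despite the non-matching triangular groupings of $\Phi$ and $D\Delta_{\mathbf{y}}(0)$; a weighted submultiplicative norm adapted to those groupings is the natural tool to make the bound rigorous, after which local exponential contraction of $\mathcal{P}$ about $\mathbf{y}=0$ on some $\mathcal{S}_C\subset\mathcal{S}_P$ follows from linear stability of the fixed point.
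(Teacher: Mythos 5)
Your overall architecture matches the paper's: regularity from smoothness of the flow plus transversality to $\mathcal{S}$ (the paper defines $\mathcal{S_{P}}$ exactly as you do, as the set from which another impact follows), then linearization of both the impact map and the continuous dynamics around $\mathcal{Z}$, the block-triangular transition matrix $\Phi_{\mathbf{y}}$, and $D\mathcal{P}(0)=\Phi_{\mathbf{y}}\bar{\Delta}_{\mathbf{y}}$. You also correctly identify the crux: $\Delta_{1}$ and $\Delta_{2}$ carry the ratio $\nicefrac{\gamma}{\beta_{0}}$, so the linearized impact map inflates (the paper notes its non-constant entries grow at most like $\left(\nicefrac{\gamma}{\beta_{0}}\right)^{2}$) while the same ratio drives the continuous contraction of $b$.

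However, your resolution of that tension goes the wrong way and leaves a genuine gap. You propose to keep $\nicefrac{\gamma}{\beta_{0}}$ bounded and push $K_{P}$, $K_{V}$, $\beta_{0}$ large. That handles the shape modes and the $c$-mode, but the $b$-direction then becomes the problem: its continuous contraction over one step is exactly $g_{bb}=e^{-\nicefrac{\gamma}{\beta_{0}}\,\psi\left(q_{N}^{-}\right)}$, which depends \emph{only} on the ratio $\nicefrac{\gamma}{\beta_{0}}$ and is therefore bounded away from $0$ under your limit ordering, while the $(b,b)$ entry of $\bar{\Delta}_{\mathbf{y}}$ is $1+O\!\left(\nicefrac{\gamma}{\beta_{0}}\right)$ (the leading $1$ comes from the first column of $\Delta_{\omega}$ being $e_{1}$, the correction from the $b$-dependence of $\dot{q}_{N}$ hidden in the definition of $c$). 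The net one-cycle multiplier on $b$ is then of the form $\left(1+\kappa\,\nicefrac{\gamma}{\beta_{0}}\right)e^{-\nicefrac{\gamma}{\beta_{0}}\,\psi}$, and nothing in your argument shows this is below $1$ for a bounded ratio — for small $\nicefrac{\gamma}{\beta_{0}}$ it is $\approx 1+\left(\kappa-\psi\right)\nicefrac{\gamma}{\beta_{0}}$, which can exceed $1$ if $\kappa>\psi$. The paper resolves this in the opposite direction: it lets $\nicefrac{\gamma}{\beta_{0}}$ grow and observes that the product $\Phi_{\mathbf{y}}\bar{\Delta}_{\mathbf{y}}$ consists of terms like $\left(\nicefrac{\gamma}{\beta_{0}}\right)^{2}e^{-\nicefrac{\gamma}{\beta_{0}}\psi}$, so the exponential contraction accumulated along the continuous phase beats the polynomial inflation of the linearized impact map, and the spectral radius can be made arbitrarily small. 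To repair your proof, replace the ``$\nicefrac{\gamma}{\beta_{0}}$ kept bounded'' step with this polynomial-versus-exponential estimate (or otherwise establish a bound on the sign and size of the $O\!\left(\nicefrac{\gamma}{\beta_{0}}\right)$ correction in the impact map's $b$-row).
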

\begin{cor}
If it is possible to find (\ref{eq:Chap2_LinearInput}) for every
possible value of $\mathbf{x}$, then the the system (\ref{eq:HybridSystem}),
except for the disc position, can be made exponentially stable around
some neighborhood $\mathcal{U}$ of $\mathcal{Z}$ by using the inputs
defined by (\ref{eq:Chap2_ControlLaw_Vr}), (\ref{eq:ControlLaw_vn})
and adjusting the parameters $K_{P}$, $K_{v}$, $\beta_{0}$ and
$\gamma$.\end{cor}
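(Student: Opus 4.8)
The plan is to lift the exponential contraction of the Poincar\'{e} map $\mathcal{P}$, asserted by the Theorem, first to orbital exponential stability of the hybrid orbit contained in $\mathcal{Z}$, and then to transport that conclusion back to the physical state $\mathbf{x}$ through the diffeomorphism linking $\mathbf{x}$ and $\mathbf{y}$, with the cyclic disc angle $q_{d}$ quotiented out.

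First I would use the hypothesis. Invertibility of the decoupling matrix, so that (\ref{eq:Chap2_LinearInput}) exists for every admissible $\mathbf{x}$, makes the linearizing feedback $\mathbf{u}^{*}$ and the successive coordinate changes (\ref{eq:Coordinate_p}) and (\ref{eq:NewCoordinates}) well defined on the whole working domain. Closing the loop with (\ref{eq:Chap2_ControlLaw_Vr}) and (\ref{eq:ControlLaw_vn}) then puts the continuous phase in the form (\ref{eq:NewHybridSystem}), so the Theorem applies: there is a neighborhood $\mathcal{S}_{C}\subset\mathcal{S}$ of $\mathbf{y}=0$ and constants $M\ge 1$, $0<\rho<1$ with $\|\mathcal{P}^{k}(\mathbf{y}_{0})\|\le M\rho^{k}\|\mathbf{y}_{0}\|$ for all $\mathbf{y}_{0}\in\mathcal{S}_{C}$.

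Next I would convert this sampled contraction into continuous convergence. Because $q_{N}$ is monotone on the continuous phase (equivalently $\dot{q}_{N}>0$, i.e. $V>0$, near $\mathcal{Z}$), the flow $\varphi_{\mathbf{y}}$ is parametrized by $q_{N}$ over the fixed interval $[q_{N}^{+},q_{N}^{-}]$; continuous dependence on the initial data, uniform on this compact interval, gives a constant $L$ with $\|\varphi_{\mathbf{y}}(q_{N}^{+},q_{N},\Delta_{\mathbf{y}}(\mathbf{y}_{k}))\|\le L\|\mathbf{y}_{k}\|$. Combining this inter-impact bound with the geometric decay of the samples $\mathbf{y}_{k}$ yields $\|\mathbf{y}(q_{N})\|\le LM\rho^{k}\|\mathbf{y}_{0}\|$ during the $(k+1)$-th step, hence exponential convergence $\mathbf{y}\to\mathbf{0}$ and thus exponential stability of the orbit in $\mathcal{Z}$; this is the standard Poincar\'{e}-section argument for systems with impulse effects used in \cite{GrizzleEtAl_FeedbackControlDynamicBipedal}. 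Transporting back, the composite map $\mathbf{x}\mapsto\mathbf{y}$ is a diffeomorphism once $q_{d}$ is dropped, and by (\ref{eq:Chap2_Momentum}) the disc speed $\dot{q}_{d}$ is a smooth function of $(\mathbf{q}_{r},\omega)$; therefore decay of $\mathbf{y}$ forces exponential convergence of every coordinate except $q_{d}$ to $\mathcal{Z}$, giving exponential stability of (\ref{eq:HybridSystem}) modulo the disc position on the neighborhood $\mathcal{U}$ obtained as the preimage of $\mathcal{S}_{C}$ intersected with $\{\dot{q}_{N}>0\}$.

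The hard part will be the second step rather than the algebra. One must guarantee that $\mathcal{S}_{C}$ is forward invariant under $\mathcal{P}$, so that iterates stay where both the contraction estimate and the $q_{N}$-reparametrization remain valid, and that the inter-impact Lipschitz constant $L$ does not swamp the per-step factor $\rho$. The latter is precisely what is bought by adjusting $K_{P}$, $K_{v}$, $\beta_{0}$ and $\gamma$: these gains fix $\rho$ through the Theorem, whereas $L$ stays bounded on the compact step interval, so a sufficiently contracting tuning always closes the argument.
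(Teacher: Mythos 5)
Your proposal is correct and follows essentially the same route as the paper: invoke the Theorem for contraction of the Poincar\'{e} map, lift that to exponential stability of the orbit in $\mathcal{Z}$ by the standard section argument, and transport the conclusion back to (\ref{eq:HybridSystem}) through the diffeomorphism (global because the decoupling matrix is invertible everywhere), with $q_{d}$ quotiented out. The only difference is one of explicitness --- you spell out the inter-impact Lipschitz bound and the forward-invariance of $\mathcal{S}_{C}$, which the paper leaves implicit in its closing remark that eigenvalues inside the unit circle imply stability in a neighborhood $\mathcal{U}$ of $\mathcal{Z}$.
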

\begin{proof}
The Poincar\'{e} return map is well defined for $\mathbf{y}=\mathbf{0}$,
$\mathcal{Z}\cap\mathcal{S}$, and this is its fixed point. The manifold
$\mathcal{Z}$ is transversal to $\mathcal{S}$ and solutions of the
hybrid system (\ref{eq:NewHybridSystem}) are continuous. So for some
sufficiently small perturbation $\epsilon\in\mathcal{S}$ around $\mathbf{0}$
before the impact, the continuous part of the system will reach again
the impact surface. If $\mathcal{S_{P}}$ is defined to be the set
in which, in the event of a impact, at least one another impact will
follow, then it is possible to conclude that $\exists\mathcal{P}:\mathcal{S_{P}}\rightarrow\mathcal{S}$.

The impact map (\ref{eq:NewImpactMapWithInput}) can be linearized
at $\mathcal{Z}\cap\mathcal{S}$. This will lead to{\small
\begin{eqnarray}
\mathbf{y}^{+} & \approx & \bar{\Delta}_{\mathbf{y}}\mathbf{y}^{-}
\end{eqnarray}
}where $\bar{\Delta}_{\mathbf{y}}$ will be a matrix whose non constant
elements will increase at most with $\left(\nicefrac{\gamma}{\beta_{0}}\right)^{2}$.
Also, the integration limits $q_{N}^{+}$ and $q_{N}^{-}$, being
determined by the impact condition, will change if $\mathbf{h}_{r}\neq\mathbf{0}$.
They can be written as functions and when linearizated around $\mathbf{y}=\mathbf{0}$
will lead to{\small
\begin{eqnarray}
q_{N}^{+} & \approx & \bar{q}_{N}^{+}+\left\langle \mathbf{h}_{r}^{+},\bar{\mathbf{f}}_{+}\right\rangle \\
q_{N}^{-} & \approx & \bar{q}_{N}^{-}+\left\langle \mathbf{h}_{r}^{-},\bar{\mathbf{f}}_{-}\right\rangle
\end{eqnarray}
}where $\bar{\mathbf{f}}_{+}$ and $\bar{\mathbf{f}}_{-}$ are constants.

Note that $\left.\xi\left(q_{N}\right)\right|_{\mathcal{Z}}=\left(V\left(q_{N}\right)\right)^{-1}$,
so the differential equation (\ref{eq:NewVectorFlowFunction}), linearizated
around $\mathcal{Z}$, will be

{\small
\begin{eqnarray}
\cfrac{d\mathbf{y}}{dq_{N}} & \approx & \Xi\mathbf{y}\label{eq:AnalyticalLinearized}
\end{eqnarray}
}{\small \par}

with{\small
\begin{eqnarray*}
\Xi & = & V^{-1}\left(\begin{array}{cccc}
0 & 1 & 0 & 0\\
-K_{p} & -K_{v} & 0 & 0\\
f_{2} & 0 & -\nicefrac{\gamma}{\beta_{0}}\left(\cfrac{\partial S}{\partial q_{N}}\right)^{2} & -\cfrac{\partial S}{\partial q_{N}}\\
0 & 0 & 0 & -\beta_{0}
\end{array}\right)
\end{eqnarray*}
}{\small \par}

If the linearized system is exponentially stable, then the non linear
system will be stable around some neighborhood $\mathcal{U}$ of $\mathcal{Z}$.

We can find an explicit solution of (\ref{eq:AnalyticalLinearized})
for the k-nth step{\small
\begin{eqnarray}
\mathbf{y}_{\left(k\right)}^{-} & = & \Phi_{\mathbf{y}}\left(q_{N\left(k\right)}^{-},q_{N\left(k\right)}^{+}\right)\mathbf{y}_{\left(k\right)}^{+}\label{eq:ContinuousEvolutionMap-1}
\end{eqnarray}
}where{\footnotesize
\begin{eqnarray}
\Phi_{\mathbf{y}}\left(q_{N},q_{N}^{+}\right) & = & \left(\begin{array}{cccc}
g_{pp} & g_{pv} & 0 & 0\\
g_{vp} & g_{vv} & 0 & 0\\
g_{bp} & g_{bv} & g_{bb} & g_{bc}\\
0 & 0 & 0 & g_{cc}
\end{array}\right)
\end{eqnarray}
}{\footnotesize \par}

{\footnotesize
\begin{eqnarray}
g_{pp}\left(q_{N}\right) & = & \cfrac{\alpha_{1}e^{-\zeta\left(q_{N}\right)\alpha_{2}}-\alpha_{2}e^{-\zeta\left(q_{N}\right)\alpha_{1}}}{\alpha_{1}-\alpha_{2}}
\end{eqnarray}
\begin{eqnarray}
g_{vp}\left(q_{N}\right) & = & V^{-1}\alpha_{1}\alpha_{2}\cfrac{e^{-\zeta\left(q_{N}\right)\alpha_{1}}-e^{-\zeta\left(q_{N}\right)\alpha_{2}}}{\alpha_{1}-\alpha_{2}}
\end{eqnarray}
\begin{eqnarray}
g_{vv}\left(q_{N}\right) & = & V^{-1}\cfrac{\alpha_{1}e^{-\zeta\left(q_{N}\right)\alpha_{1}}-\alpha_{2}e^{-\zeta\left(q_{N}\right)\alpha_{2}}}{\alpha_{1}-\alpha_{2}}
\end{eqnarray}
\begin{eqnarray}
g_{cc}\left(q_{N}\right) & = & e^{-\beta_{0}\zeta\left(q_{N}\right)}
\end{eqnarray}
\begin{eqnarray}
g_{bb}\left(q_{N}\right) & = & e^{-\nicefrac{\gamma}{\beta_{0}}\psi\left(q_{N}\right)}
\end{eqnarray}
\begin{eqnarray}
g_{bc}\left(q_{N}\right) & = & e^{-\nicefrac{\gamma}{\beta_{0}}\psi\left(q_{N}\right)}\intop_{q_{N}^{+}}^{q_{N}}e^{\nicefrac{\gamma}{\beta_{0}}\psi\left(q_{N}\right)-\beta_{0}\zeta}\cfrac{\partial S}{\partial q_{N}}V^{-1}dq_{N}
\end{eqnarray}
}{\footnotesize \par}

{\footnotesize
\begin{eqnarray}
g_{bp}\left(q_{N}\right) & = & -\cfrac{e^{-\nicefrac{\gamma}{\beta_{0}}\psi\left(q_{N}\right)}}{\alpha_{1}-\alpha_{2}}\intop_{q_{N}^{+}}^{q_{N}}y_{bp}f_{2}\left(q_{N}\right)V^{-1}dq_{N}
\end{eqnarray}
}{\footnotesize \par}

{\footnotesize
\begin{eqnarray}
y_{bp} & = & \alpha_{2}e^{\nicefrac{\gamma}{\beta_{0}}\psi\left(\tau\right)-\alpha_{2}\zeta\left(\tau\right)}+\alpha_{1}e^{\nicefrac{\gamma}{\beta_{0}}\psi\left(\tau\right)-\alpha_{1}\zeta\left(\tau\right)}
\end{eqnarray}
\begin{eqnarray}
g_{bv}\left(q_{N}\right) & = & -\cfrac{e^{-\nicefrac{\gamma}{\beta_{0}}\psi\left(q_{N}\right)}}{\alpha_{1}-\alpha_{2}}\intop_{q_{N}^{+}}^{q_{N}}y_{bv}f_{2}\left(q_{N}\right)V^{-1}dq_{N}
\end{eqnarray}
\begin{eqnarray}
y_{bv} & = & e^{\nicefrac{\gamma}{\beta_{0}}\psi\left(\tau\right)-\alpha_{2}\zeta\left(\tau\right)}-e^{\nicefrac{\gamma}{\beta_{0}}\psi\left(\tau\right)-\alpha_{1}\zeta\left(\tau\right)}
\end{eqnarray}
\begin{eqnarray}
\alpha_{1,2} & = & K_{v}\pm\left(K_{v}^{2}-4K_{p}\right)^{\nicefrac{1}{2}}
\end{eqnarray}
\begin{eqnarray}
\zeta\left(q_{N}\right) & = & \intop_{q_{N}^{+}}^{q_{N}}V^{-1}\left(\tau\right)d\tau\geq0\forall q_{N}
\end{eqnarray}
\begin{eqnarray}
\psi\left(q_{N}\right) & = & \intop_{q_{N}^{+}}^{q_{N}}\left(\cfrac{\partial S\left(\tau\right)}{\partial q_{N}}\right)^{2}V^{-1}d\tau\geq0\forall q_{N}
\end{eqnarray}
}{\footnotesize \par}

It is possible to see that the parameters $K_{p}$, $K_{v}$, $\beta_{0}$
and $\gamma$ can be chosen large enough to have any initial condition
$\mathbf{y}_{\left(k\right)}^{+}$ at $q_{N}=q_{N\left(k\right)}^{+}$
attenuated as much as we want at the point $q_{N}=q_{N\left(k\right)}^{-}$,
end of the step. This will be true even if there are some small pertubation
on the integration limits due to $\mathbf{h}_{r}\neq\mathbf{0}$.

Choosing $\mathcal{S}$ as the Poincar\'{e} surface, it will be possible
to write its linearization as{\small
\begin{eqnarray}
\bar{\mathcal{P}}\left(\mathbf{y}_{0}\right) & \approx & \Phi_{\mathbf{y}}\left(q_{N}^{-},q_{N}^{+}\right)\bar{\Delta}_{\mathbf{y}}\mathbf{y}_{0}
\end{eqnarray}
}and this product  will be basically composed of terms like $\left(\nicefrac{\gamma}{\beta_{0}}\right)^{2}e^{-\nicefrac{\gamma}{\beta_{0}}}$
that, after some peak point, will exponentially became smaller as
$\nicefrac{\gamma}{\beta_{0}}$ increases. By adjusting the tunning
parameters, the linearized Poincar\'{e} map can be made as small as we
want and, as result, its eigenvalues can be allocated inside the unit
circle.
%
Even starting in a neighborhood of $\mathcal{W}$, where the integration
limits $q_{N}^{+}$ and $q_{N}^{-}$ will be different from the nominal
values, the stability is assured. In fact, appropriate values of $K_{P}$
and $K_{V}$ will lead, along each step, to the convergence of $q_{N\left(k\right)}^{+}$
and $q_{N\left(k\right)}^{-}$ to the nominal values $\bar{q}_{N}^{+}$
and $\bar{q}_{N}^{-}$.
\end{proof}
Under the conditions of the Corollary there exist a diffeomorphism
between (\ref{eq:NewHybridSystem}) and (\ref{eq:HybridSystem}),
except for the disc position. If the jacobian of the Poincar\'{e} return
map evaluated at $\mathcal{Z}\cap\mathcal{S}$ has its eigenvalues
inside the unit circle, then the system (\ref{eq:NewHybridSystem})
will be stable around some neighborhood $\mathcal{U}$ of $\mathcal{Z}$.
This can be achieved by choosing the parameters of the control law.
As conclusion, the system (\ref{eq:HybridSystem}) can be made stable
in the neighborhood $\mathcal{U}$, except for the disc position.

\section{Numerical simulations}\label{sec:Numerical-simulations}

By now some simulation results are presented for a model like the
one illustrated on picture \ref{fig:Chap2_ExampleBiped}. The corresponding author
can submit the numerical values and other details by email.

The proposed control law was simulated with parameters $\beta_{0}=4.5$
and $\gamma=0.35$, with the robot starting from rest at positions
$q_{N}=-10\degree$ and $q_{r}=-20\degree$. This means that the robot
state is outside $\mathcal{W}$. The simulated
behavior can be viewed on figure \ref{fig:Chap4_SpeedControl-2}.
\begin{figure}[h]
\begin{centering}
\includegraphics[width=1\columnwidth]{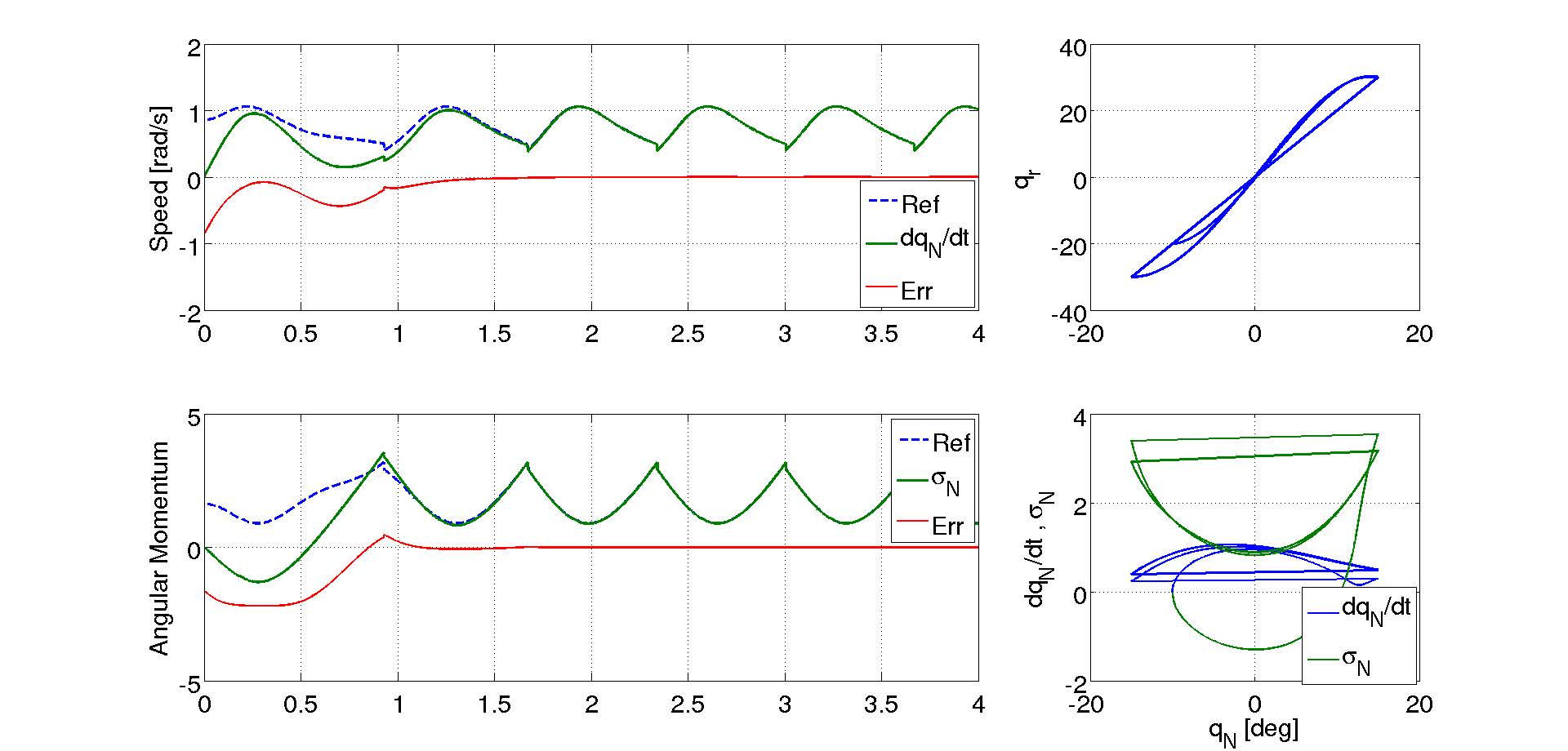}
\par\end{centering}

\caption{Complete control with exact model and starting outside $\mathcal{W}$
manifold.\label{fig:Chap4_SpeedControl-2}}
\end{figure}

Figure \ref{fig:Chap4_SpeedControl-3} show the results when some
parametric errors are included on the model. Anyway, the control remains
stable and can drive the robot close to the reference.
\begin{figure}[h]
\begin{centering}
\includegraphics[width=1\columnwidth]{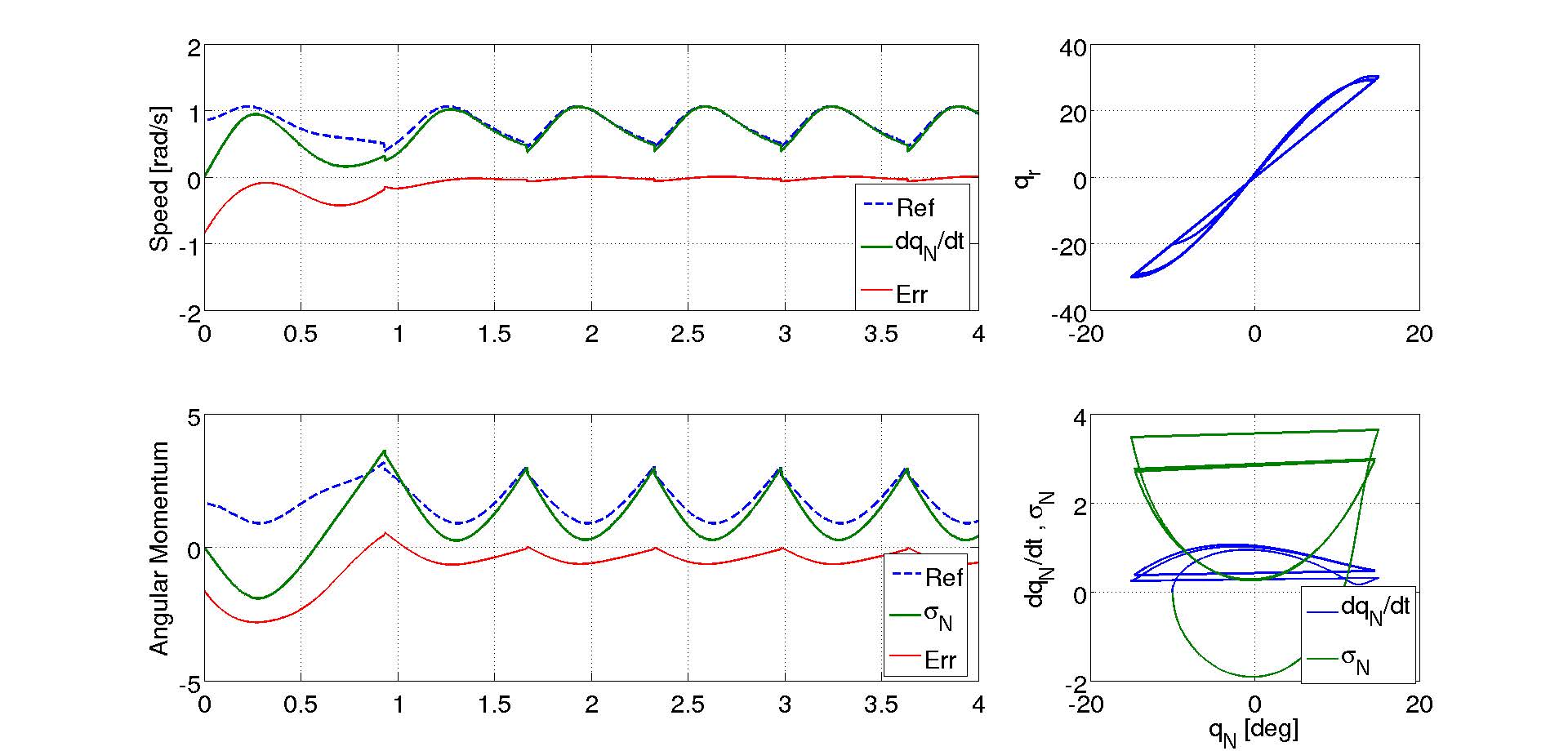}
\par\end{centering}

\caption{Complete control, model with parametric errors and starting outside
$\mathcal{W}$ manifold.\label{fig:Chap4_SpeedControl-3}}
\end{figure}

\section{Conclusions}

We have derived a control law for the presented class of bipedal robots,
so that the robot configuration will be tied to the absolute orientation.
Also the speed of this absolute orientation and the angular momentum
will follow predetermined references. As result, the robot will asymptotically
converge to a walking gait at the same time as the average disc speed
can be driven to zero (or any other desired value) by an appropriate
offset in the reference for the angular momentum.

The stability of the proposed control was proved around some neighborhood
of the nominal step and could be verified in numerical simulations.
The simulations shows that the domain of attraction is somewhat big
as the nominal step is reached even if the robot starts with no speed
and the results could also be validated for some parametric errors



\end{document}